\def \N {{\mathbb N}}
\def \Z {{\mathbb Z}}
\def \R {{\mathbb R}}
\def \0 {{\mathbf 0}}
\newtheorem{theorem}{Theorem}[section]
\newtheorem{lemma}[theorem]{Lemma}
\newtheorem{cor}[theorem]{Corollary}
\newtheorem{defn}{Definition}[section]
\newtheorem{ex}[theorem]{Example} 
\newtheorem{rem}[theorem]{Remark}
\def\[{[\hskip-1pt [}
\def\]{]\hskip-1pt ]}
\def\R{{\mathbb R}}
\def\N{{\mathbb N}}
\def\++{\boxplus}
\title{A Note on the Exponents of Primitive Companion Matrices}
\begin{document}
\baselineskip 15pt
\author {Monimala Nej, A. Satyanarayana Reddy\\
Department of 
Mathematics, Shiv Nadar 
University, India-201314\\ (e-mail: 
mn636@snu.edu.in, satyanarayana.reddy@snu.edu.in).
  }
\date{}   
\maketitle
\begin{abstract}
 A nonnegative matrix $A$ is said to be {\it primitive} if for some positive integer $m$, entries in $A^m$ are positive, notationally represented as $A^m>0.$ The smallest such $m$ is called the {\it exponent} of $A$, denoted $exp(A).$ For the class of primitive companion matrices $X$, we find $exp(A)$ for certain $A \in X.$ Thereafter, we find certain numbers in $E_n(X)$, where $E_n(X)=\{m \in \N : \text{there  exists an  $n \times n$ matrix $A$ in $X$ with} \; exp(A)=m\}$. At the end we propose open problems for further research. 
\end{abstract}
{\bf{Key Words}}: Primitive matrix, Companion matrix, Exponent.\\
{\bf{AMS(2010)}}: 05C50, 05C38,15B99.
\section{Introduction} 
Let $M_n(\R)$ be the set of all real matrices of order $n$. We will denote the $ij$-th entry in $A \in M_n(\R)$  by $a_{ij}$ or $A_{ij}.$  If $A,\;B \in M_n(\R),$ then by $A\ge B$ ($A>B$) we mean 
$a_{ij}\ge b_{ij}$ ($a_{ij}>b_{ij}$) for all $i,j.$ In particular, if $B$ is the zero matrix, then $A$ is said to be a {\it nonnegative (positive)} matrix, denoted by $A \geq 0$ ($A >0$).
An {\it irreducible} matrix which is not primitive is called an {\it imprimitive} matrix. We refer to Henryk Minc \cite{minc} for the definition and properties of an irreducible matrix. For a class $X$ of nonnegative matrices, the {\it exponent set} $E_n(X)$ of $X$ is defined as:
$$E_n(X)=\{m\in \N : \mbox{ there exists a primitive matrix $A$ of order $n$ in X with 
$exp(A)=m$}\}.$$ 
It is very easy to observe that $1\in E(M_n(\R)).$ In 1950, Helmut Wielandt proved that if $A\in M_n(\R)$ is a primitive matrix, then $exp(A)\leq \omega_{n}$, where 
$\omega_{n}=(n-1)^2+1.$ This bound is well known as the Wielandt bound.  
For the proof, we refer to Hans Schneider \cite{HS}, Holladay and 
Varga \cite{Ho:Va} and Perkins \cite{Per}. Wielandt also proved that $\omega_n$ is a sharp bound, that is, there exists a primitive matrix of order $n$ whose exponent is $\omega_n.$ As a consequence, $E(M_n(\R))\subseteq [1,\omega_n],$ where $[a,b]$ denotes the set $\{i\in \Z  : a\le i\le b\}.$
In 1964, A.L. Dulmage and N.S. Mendelsohn \cite{D:M} found that $E(M_n(\R))\subset[1,\omega_n].$ Research focused on primitive exponents ever since 1950, when Wielandt published his paper \cite{Wie}. For a given class $X$ of nonnegative matrices, finding $E_n(X),$ bounds on $E_n(X)$ and matrices in $X$ which attain those bounds are the major parts of the literature. For instance, the papers~\cite{Bru:Ross}, \cite{Lewin}, 
\cite{Liu1}, \cite{Shao}, \cite{B:M:W},\cite{Ross}, \cite{Dul:Men}  studied those problems for 
different classes of nonnegative matrices.

Let $C_n$ be the  set of all $(0,1)$
companion matrices of polynomials  of the form 
$x^{n}-\sum\limits_{i=0}^{n-1}a_{i}x^{i}$, where $a_i\in \{0,1\}.$ That is, $C_n=\{A\in M_n(\R) : (a_{i\;i+1})_{1\leq i \leq n-1}=1,  (a_{ni})_{1 \leq i \leq n} =a_{i-1} \;
{\text{and}} \; a_{ij}=0 \; {\text {otherwise}}\}.$ In this paper we wish to investigate some problems on primitive companion matrices. The set of all primitive $(0,1)$ companion matrices will be denoted by $CP_n.$ As per the best of our knowledge no one has studied the number of imprimitive matrices and the number of primitive matrices with a given 
exponent in a given class. Furthermore, there is no specific formula for 
 computing the exponent of a given matrix from a given class.  Here in this work, we are interested in solving these problems for $C_n.$

It is easy to 
verify that a nonnegative matrix is primitive if and only if its sign matrix is primitive. If $A$ is a nonnegative matrix, the {\it sign matrix} of $A,$ denoted $\it{sign}(A),$ is the $(0,1)$ matrix such that $\it{sign}(A)_{ij}=1$ if and only if $a_{ij}>0.$ Furthermore, the exponent of a primitive matrix is always the same as the exponent of its sign matrix. It is thus sufficient to work with primitive $(0,1)$ companion matrices. Also, in the context of powers of matrices, this means that the algebra of interest is the Boolean algebra and thus $1+1=1$ naturally follows. Since the first 
$n-1$ rows of every matrix in $C_n$ are fixed, it is sufficient to 
specify 
the last row. Clearly, 
there is a bijection between $C_n$ and $B_n$, where $B_n$ denotes the set 
of 
all binary strings of length $n.$ In particular, 
$|C_n|=|B_n|=2^n,$ where $|S|$ denotes the number of elements in the set $S.$  Now the elements in $C_n$ will be denoted as $A_Y$, where $Y \in B_{n-1}$ and the
last row of $A_Y$ will be $1Y$ or $0Y$ accordingly as $A_Y$ is irreducible or reducible respectively.  
We arrange the contents of this paper as follows. In section \ref{sec:loop}, we find the number of primitive $(0,1)$ companion matrices of order $n.$ Also we find the exponent of $A,$ where $A \in CP_n$ and the trace of $A$ is positive. In Section \ref{sec:nloop}, we discuss the exponents of primitive $(0,1)$ companion matrices with zero trace. At the end, we show the existence of certain numbers in $E_n(CP_n)$. It should be noted that the non-existence of some numbers in $E(CP_n)$ follows from A.L. Dulmage and N.S. Mendelsohn \cite{D:M}, M. Lewin and Y. Vitek \cite{M:Y} and Ke Min Zhang \cite{ZKM}. Finally, we suggest some open problems in Section \ref{sec:prob}. In the rest of this section we will discuss a few preliminaries and notations required for the rest of the paper. \\

We denote $D=(V,E)$ as a digraph (directed graph) with the vertex set $V=V(D),$ the edge set $E=E(D)$ and order $n=|V|.$ Throughout the paper, loops are permitted but no multiple edges. A $u \to v$ {\it walk} in $D$ is a sequence of vertices $u, u_1, u_2, \ldots, u_l=v$ and a sequence of edges $(u,u_1), (u_1,u_2), \ldots, (u_{l-1},v)$ where vertices and edges may be repeated. A {\it cycle or closed walk} is a $u \to v$ walk where $u=v.$ A {\it path} is a walk with distinct vertices. An {\it elementary cycle} is a closed $u \to v$ walk with distinct vertices except for $u=v.$ The {\it length of a walk} is the number of edges in the walk. The notation $u \xrightarrow{l} v$ (resp. $u \not\xrightarrow{l} v$) is used to indicate that there is a $u \to v$ walk (resp. no $u \to v$ walk) of length $l$ and the notation $u \xrightarrow{l^+} v$ to indicate that $u \xrightarrow{m} v$ for all $m \geq l.$

For an $n \times n$ $(0,1)$ matrix $A,$ the {\it adjacency digraph}, denoted by $D(A)$, is the digraph $D=(V,E)$ such that $V=\{1,2,\ldots,n\}$ and $(i,j) \in E$ if and only if $a_{ij}=1.$ On the other hand, for a digraph $D$, the {\it adjacency matrix} $A$ of $D$ is defined as follows: $a_{ij}=1$ if there is an edge from $i$ to $j$ in $D$ and $a_{ij}=0$ otherwise. If $D$ is the adjacency digraph of $A$, then $D^k$ is defined to be the adjacency digraph of $A^k.$ It is easy to observe that the $ij$-th entry of $A^k$ is $1$ if and only if there is a walk of length $k$ from $i$ to $j$ in $D.$
 If $D$ is a digraph, then  $D$ is primitive (imprimitive) 
if and only if its adjacency matrix is  primitive (imprimitive)  and $exp(D)$ is defined to 
be  the exponent of its adjacency matrix. Hence one can interchangeably use a  matrix or its adjacency digraph for the purpose of establishing the primitivity and exponent of the matrix. More detail about digraphs and primitivity of digraphs is available in Richard A. Brualdi and Herbert J. Ryser \cite{bru} and Henryk Minc \cite{minc}.

For $A\in C_n$ we define $V_1(A), V_2(A)$ (or simply $V_1,V_2$ if 
$A$ is clear from the context)  as 
$$V_1=\{i \in [1,n] : a_{ni}=0\}\;\; and\;\; V_2=\{i \in [1,n] : 
a_{ni}>0\}.$$ 
For  $U\subseteq [1,n]$ write  $U=U_1\cup U_2\cup \dots \cup U_r$, where for 
each $ k\in [1,r]$, $U_k=[i_k, j_k]$ with $1 \leq i_k \leq j_k \leq n,$ and for each $k \in [2,r],$ $j_{k-1}+2 \leq i_k.$ We define 
$m(U)=\max\{|U_1|, 
|U_2|,\ldots, |U_r|\}.$  For $U= \emptyset$, $m(U)$ is assumed to be zero.  
For example, if $U=\{2,3,5,7,8,9,10,13,14\},$ then $U=\{2,3\}\cup \{5\}\cup 
\{7,8,9,10\}\cup \{13,14\}$ and 
$m(U)=|\{7,8,9,10\}|=4.$ Note that $m(V_1)\in [0, n-2]$ whenever $A \in CP_n.$ 

\begin{ex}\label{ex:1011}
From the digraph $D$ in Figure \ref{fig:figure1}, we have  $V_1=\{2\}\cup \{4\}\cup 
\{6\}\cup \{8, 9, 10\},$ $m(V_1)=3$ and $V_2=\{1,3,5,7\}.$ 
\end{ex}
\begin{figure}[hbt]
\begin{center}
\includegraphics[width=.2\linewidth, width=.2\textwidth]{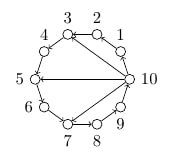}
\end{center}
\caption{}  \label{fig:figure1}
\end{figure}
\section{Number of primitive companion matrices and exponent of $A \in CP_n$ with a positive trace } \label{sec:loop}
It is known that if $A$ is a primitive matrix, then $A$ is irreducible, but the converse is not true. For example, consider $A \in C_n$ with $a_{n0}=1$ and $a_{ni}=0, 1 \leq i \leq n-1.$ Then $A$ is irreducible but not primitive. Moreover, the primitive and imprimitive matrices are defined over the class of irreducible matrices. We let $C'_n$ denote the set of all irreducible matrices in $C_n$. Then $|C'_n|=2^{n-1}$ follows from the following definition.
\begin{defn}(\cite{minc})
\begin{enumerate}
\item
A digraph $D$ is said to be {\it strongly connected} if there exist a walk from $u$ to $v$ for each $u$ and $v \in V.$ 
\item
Let $D$ be a strongly connected digraph. The greatest common divisor $h$ of the lengths of all elementary cycles in $D$ is called the {\it index of imprimitivity} of the digraph $D.$ If $h=1$, then $D$ is called a primitive digraph; otherwise, it is called imprimitive.
\end{enumerate}
\end{defn}
It is easy to see that a nonnegative matrix $A$ is irreducible if and only if $D(\it{sign}(A))$ is strongly connected. Now the following theorem gives the number of imprimitive matrices in $C'_n$. 
 \begin{theorem}
Let $n \geq 3$ be an integer. Then $$|C'_n \setminus CP_n| = \sum\limits_{k=1}^{r}(-1)^{k+1}\left(\sum\limits_{1 \leq i_1<i_2 < \dots < i_k \leq r} 2^{\frac{n}{p_{i_1}p_{i_2} \dots p_{i_k}}-1}\right),$$  where $p_{1},p_{2},\ldots ,p_{r}$ are all possible distinct prime factors of $n$.
\end{theorem}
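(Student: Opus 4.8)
The plan is to reduce the count to a divisor-counting problem via the elementary-cycle structure of the companion digraph, and then assemble the answer by inclusion--exclusion over the prime divisors of $n$.

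First I would describe the elementary cycles of $D(A)$ for $A \in C'_n$. The fixed edges $i \to i+1$ (for $1 \le i \le n-1$) form a single path from $1$ to $n$, and the only return edges are $n \to j$ for those $j$ with $a_{nj}=1$. Hence every elementary cycle has the form $j \to j+1 \to \cdots \to n \to j$, so there is exactly one elementary cycle for each $j$ with $a_{nj}=1$, and its length is $n-j+1$. Writing $L(A)=\{\,n-j+1 : a_{nj}=1\,\}$ for the set of elementary-cycle lengths, the index of imprimitivity is $h=\gcd L(A)$. Since every $A\in C'_n$ is irreducible we have $a_{n1}=1$, so $n\in L(A)$ always; consequently $h\mid n$, and $A$ is imprimitive precisely when some prime $p\mid n$ divides every element of $L(A)$.

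Next I would count, for a fixed divisor $d\mid n$ with $d>1$, the matrices $A\in C'_n$ for which $d\mid \gcd L(A)$, i.e.\ every cycle length is a multiple of $d$. The condition $d\mid (n-j+1)$ together with $d\mid n$ forces $j\equiv 1 \pmod d$, so $a_{nj}$ may be nonzero only at the $n/d$ positions $j\in\{1,\,1+d,\,\ldots,\,1+(n/d-1)d\}$. The position $j=1$ is forced to equal $1$ by irreducibility, the remaining $n/d-1$ admissible positions are free, and every other entry of the last row must be $0$; this yields exactly $2^{\,n/d-1}$ matrices. Applying this with $d=p_{i_1}p_{i_2}\cdots p_{i_k}$, a product of distinct prime divisors of $n$, gives $2^{\,n/(p_{i_1}\cdots p_{i_k})-1}$ such matrices.

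Finally I would assemble the total. The imprimitive matrices in $C'_n$ are exactly the set $\bigcup_{i=1}^{r}\{A : p_i\mid \gcd L(A)\}$, where $p_1,\ldots,p_r$ are the distinct primes dividing $n$. Because the $p_i$ are distinct, the intersection over a subset $\{i_1,\ldots,i_k\}$ is precisely $\{A : p_{i_1}\cdots p_{i_k}\mid \gcd L(A)\}$, of size $2^{\,n/(p_{i_1}\cdots p_{i_k})-1}$ by the previous step. Inclusion--exclusion then produces the claimed alternating sum. The one step needing genuine care is the cycle-structure lemma, since the whole reduction rests on the facts that the elementary-cycle lengths are exactly $\{n-j+1 : a_{nj}=1\}$ and that $n$ always occurs among them; once these are established, the divisibility bookkeeping and the inclusion--exclusion are routine.
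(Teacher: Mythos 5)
Your proposal is correct and follows essentially the same route as the paper: identify the elementary cycles of $D(A)$ as $j\to j+1\to\cdots\to n\to j$ of length $n-j+1$, observe that imprimitivity means some prime $p\mid n$ divides every cycle length, and apply inclusion--exclusion over the prime divisors of $n$. Your bookkeeping is in fact slightly cleaner: by counting all $2^{n/d-1}$ matrices with $d\mid\gcd L(A)$ directly (including the one whose only cycle has length $n$), you avoid the paper's separate treatment of that single matrix and the cancellation of the $\pm 1$ terms arising from its count $|B_i|=2^{n/p_i-1}-1$ of nonempty subsets.
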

\begin{proof}
Let us denote $A_i=\{y \cdot p_i : y \in [1,(\frac{n}{p_i}-1)]\}$ and $B_i=\{X \subseteq A_i : X \neq \emptyset \}$ for each $i \in [1,r].$  Then it is easy to see that $A \in C'_n$ is imprimitive if the length of every elementary cycle, excluding the length $n$, in $D(A)$ belongs to $B_i$ for some $i.$ Hence for each $i,$ there are $|B_i|$ imprimitive matrices in $C'_n.$ Furthermore, if $D(A)$ contains exactly one cycle, then the length of the cycle is $n,$ and $A$ is imprimitive. Now $|B_{i}|=(2^{\frac{n}{p_{i}}-1}-1)$ and by inclusion-exclusion principle 
\begin{align}
|C'_n \setminus CP_n|&=\sum\limits_{1 \leq i_1 \leq r} 2^{\frac{n}{p_{i_1}}-1}-\sum\limits_{1\leq i_1 < i_2 \leq r} 2^{\frac{n}{p_{i_1}p_{i_2}}-1}+\sum\limits_{1\leq i_1 < i_2 < i_3\leq r} 2^{\frac{n}{p_{i_1}p_{i_2}p_{i_3}}-1}-\dots (-1)^{r+1}2^{\frac{n}{p_{1}p_{2}\dots p_{r}}-1} \nonumber\\
                      &=\sum\limits_{k=1}^{r}(-1)^{k+1}\left(\sum\limits_{1 \leq i_1<i_2 < \dots < i_k \leq r} 2^{\frac{n}{p_{i_1}p_{i_2} \dots p_{i_k}}-1}\right).\qedhere                                                                                                                                                              
                      \end{align} 
                    
\end{proof}

For an example, let us take $n=8.$ Then the number of imprimitive matrices of order 8 is $2^{(\frac{8}{2}-1)} = 8$ and they are given by $A_Y$, where $$Y \in \{0000000,0100000,0001000,0000010,0101000,0100010,0001010,0101010\}.$$  Hence $|CP_8|=(2^7-8)=120.$ For $n=10$, there are $2^4+2^1-1=17$ imprimitive matrices in $C'_{10}.$

From now onwards, we confine our study to primitive companion  matrices and focus on finding $E(CP_n).$ For this purpose it is sufficient to know $exp(A)$ whenever $A \in CP_n, n \geq 2.$ In this paper we find the $exp(A)$ for certain cases and we provide some results which may be helpful in finding the $exp(A)$ for the remaining cases. We now recall a general procedure that evaluates the exponent of a primitive matrix.

Let $A$ be a primitive matrix. Then
\begin{enumerate}
 \item $exp(A:i,j)$ is defined to be the 
smallest positive integer $k$ such that $i \xrightarrow{k^+} j$ in $D(A).$ Equivalently, 
$A^l_{ij}>0$ for any integer $l \geq k.$
\item $exp(A:i)$ is the smallest positive integer 
$p$ such that $i \xrightarrow{p} j$ for all $j \in V.$  Equivalently, every entry in the $i^{th}$ row of $A^p$ is 
positive. As a consequence, every entry in 
the $i^{th}$ row of $A^{p+1}$ is also  positive.
\end{enumerate}
The following result is well known. For instance,  see \cite{bru}.
\begin{lemma}\label{lemma:exp(A)}
 Let $A\in M_n(\R)$ be a primitive matrix. Then
 $$exp(A)=\max\limits_{1 \leq i,j \leq n}exp(A:i,j)=\max\limits_{1 \leq i \leq 
n}exp(A:i).$$
\end{lemma}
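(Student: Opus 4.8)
The plan is to establish the chain of equalities by proving the two identities $\max_{i,j} exp(A:i,j) = \max_i exp(A:i)$ and $exp(A) = \max_{i,j} exp(A:i,j)$ separately, with a single underlying tool: a persistence (monotonicity) property of positive rows under powering. The genuine content of the lemma is small, so the emphasis is on organizing the definitional bookkeeping cleanly rather than on any deep estimate.

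First I would isolate the persistence property. Since $A$ is primitive it is irreducible, so $D(A)$ is strongly connected; when $n \geq 2$ this forces every vertex to have in-degree at least one (the last edge of any walk terminating at $v$ witnesses an in-neighbour). Consequently, if every entry of the $i$th row of $A^p$ is positive, then for any $j$ we may pick an in-neighbour $k$ of $j$ and concatenate a length-$p$ walk $i \xrightarrow{p} k$ with the edge $(k,j)$ to obtain $i \xrightarrow{p+1} j$. Hence the $i$th row of $A^{p+1}$ is positive, and by induction the $i$th row of $A^{l}$ is positive for every $l \geq p$. Applying the same argument to all rows simultaneously yields $A^{p} > 0 \Rightarrow A^{l} > 0$ for all $l \geq p$. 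This is exactly the ``as a consequence'' remark accompanying the definitions above, and it is the heart of the argument.

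Next I would prove $exp(A:i) = \max_j exp(A:i,j)$ for each fixed $i$. Writing $p = exp(A:i)$, the persistence property gives $A^{l}_{ij} > 0$ for all $l \geq p$ and all $j$, so $exp(A:i,j) \leq p$ for every $j$ and thus $\max_j exp(A:i,j) \leq exp(A:i)$. For the reverse inequality, set $q = \max_j exp(A:i,j)$; then $A^{q}_{ij} > 0$ for every $j$ because $q \geq exp(A:i,j)$ for each $j$, so the $i$th row of $A^{q}$ is positive and $exp(A:i) \leq q$. Taking the maximum over $i$ converts this row-by-row identity into $\max_i exp(A:i) = \max_{i,j} exp(A:i,j)$.

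Finally I would identify $exp(A)$ with $\max_{i,j} exp(A:i,j)$ by the same two-sided comparison at the level of the whole matrix. The matrix-level persistence gives $A^{l} > 0$ for all $l \geq exp(A)$, whence $exp(A:i,j) \leq exp(A)$ for all $i,j$ and so $\max_{i,j} exp(A:i,j) \leq exp(A)$; conversely, at $l = \max_{i,j} exp(A:i,j)$ every entry $A^{l}_{ij}$ is positive, so $A^{l} > 0$ and $exp(A) \leq \max_{i,j} exp(A:i,j)$. I expect no serious obstacle here. The only point requiring care is the gap between the two flavours of definition --- $exp(A:i,j)$ demands walks of \emph{every} length $\geq k$, whereas $exp(A:i)$ and $exp(A)$ are each phrased through a \emph{single} power --- and this gap is precisely what the persistence property closes. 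One should also record that the in-degree argument assumes $n \geq 2$, the case $n = 1$ being trivial since then $A = [1]$ and all three quantities equal $1$.
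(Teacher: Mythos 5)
Your proof is correct. Note that the paper itself offers no proof of this lemma --- it is quoted as a well-known fact with a citation to Brualdi and Ryser --- so there is no argument of the paper's to compare against; your write-up simply supplies the standard definitional verification. You correctly identify the one point of substance, namely the mismatch between the ``all lengths $\geq k$'' quantifier in the definition of $exp(A:i,j)$ and the single-power phrasing of $exp(A:i)$ and $exp(A)$, and the persistence property you prove via positive in-degrees in a strongly connected digraph is exactly the right tool to close that gap (it is also the unproved ``as a consequence'' remark in the paper's definition of $exp(A:i)$). The two-sided comparisons and the $n=1$ caveat are all in order.
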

\begin{ex} Consider the matrix $A$ with $D(A)$ given in Figure \ref{fig:figure2}. Note that there is an edge $(i,j)$ in $D(A)$ if and only if $(j,i)\in E.$ It is easy to check that $exp(A:8,1)=1$, $exp(A:8,8)=2$ and $exp(A:i)=2$ for all $i.$ Hence $exp(A)=2.$
\end{ex}  
\begin{figure}[hbt]
\begin{center}
\includegraphics[width=.2\linewidth, width=.2\textwidth]{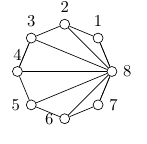}
\end{center}
\caption{} \label{fig:figure2}
\end{figure}
Thus, our goal is to find $exp(A:i,j)$ for all $i,j\in [1,n].$ Before proceeding to the next result, we need the following notation. We denote $L=\{\ell_{i} : 1 \leq i \leq z\}$, where $\ell_{1},\ell_{2},\dots,\ell_{z}$ are all possible distinct elementary cycle lengths in $D(A)$ with $\ell_{1}<\ell_{2}<\dots<\ell_{z}.$  Let $B_n^{x,k}\subseteq B_n$ denote the set of all binary strings with $x$ zeros and having at least 
one longest sub-word of zeros of length $k.$ For example, 
$B_6^{4,2}=\{100100,010100,010010,001100,001010,001001\}.$ 
Consequently, a  necessary condition for  $B_n^{x,k}$ to be nonempty is $n\ge 
x\ge 
k\ge 0.$  An immediate observation is that $B_n=\cup_{x=0}^n\cup_{k=0}^x 
B_n^{x,k}.$ Thus $2^n=\sum\limits_{x=0}^n\sum\limits_{k=0}^x F_n(x,k),$
where $F_n(x,k)=|B_n^{x,k}|.$ The value of $F_n(x,k)$ is defined to be zero 
whenever $n< 0.$ For the basic results and facts about $F_n(x,k),$ we refer the reader to Monimala Nej and A. 
Satyanarayana Reddy \cite{M:S}. M.A. Nyblom \cite{NY1} 
denoted $S_{r}(n)$ for the set of all binary strings 
of length $n$ without  $r$-runs of ones, where $n\in \mathbb{N}$ and $r \geq 
2$, and $T_{r}(n)=|S_{r}(n)|$. For example, if  $n=3$ and $r=2$, then 
$S_{2}(3)=\{000, 101,001,100,010\}$ and $T_{2}(3)=5$.
\begin{theorem}\label{positive trace}
Let $A \in CP_n$ such that the trace of $A$ is positive. Then $exp(A)=n+m(V_1)$.
\end{theorem}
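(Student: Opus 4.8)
The plan is to invoke Lemma \ref{lemma:exp(A)} and compute $exp(A:i)$ for every $i$ directly from the shape of $D(A)$. First I would record that geometry: the superdiagonal of the companion matrix gives the directed path $1 \to 2 \to \cdots \to n$, while the last row gives the return edges $n \to j$ for exactly those $j \in V_2$. Since $A \in CP_n$ is irreducible we have $a_{n1}=1$, so $1 \in V_2$, and since the trace is positive we have $a_{nn}=1$, so $n \in V_2$; hence $D(A)$ carries a loop at $n$ and $V_1 \subseteq [2,n-1]$. This loop is the engine of the whole argument: if $u \xrightarrow{\ell} n$ then $u \xrightarrow{\ell+1} n$, and if $n \xrightarrow{\ell} v$ then $n \xrightarrow{\ell+1} v$, so any set of walk-lengths beginning or ending at $n$ is upward closed.

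Next I would compute the distance $d(n,j)$ from $n$ to each $j$. The only in-edge of a vertex $j \in V_1$ is $j-1 \to j$, so to reach such a $j$ from $n$ one must jump via $n \to j'$ to the largest $j' \in V_2$ with $j' \le j$ and then walk up the path, giving $d(n,j) = 1 + (j-j')$; here $j-j'$ is exactly the length of the block of consecutive $V_1$-vertices ending at $j$ (and $d(n,j)=1$ when $j \in V_2$). Maximising over $j$ and using that the longest such block has length $m(V_1)$, attained at the top $j_k \le n-1$ of a longest interval of $V_1$, yields $\max_{j} d(n,j) = 1 + m(V_1)$. Combined with the loop, this says $n \xrightarrow{(1+m(V_1))^+} j$ for every $j$, i.e. $exp(A:n) = 1 + m(V_1)$.

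Then I would treat an arbitrary starting vertex $i$ by sorting walks from $i$ to $j$ according to whether they visit $n$. A walk avoiding $n$ can only run up the path, so it exists only when $i \le j < n$ and then has the single length $j-i$. Any walk that does visit $n$ reaches $n$ for the first time after exactly $n-i$ steps, after which all lengths $\ge d(n,j)$ to $j$ are available; hence the $n$-visiting walks from $i$ to $j$ realise precisely the interval $[(n-i)+d(n,j),\infty)$. Because $(n-i)+d(n,j)-(j-i) = (n-j)+d(n,j) \ge 2$ for $j<n$, the isolated value $j-i$ lies strictly below this interval with a gap, so it cannot lower the threshold; thus $exp(A:i,j) = (n-i)+d(n,j)$ for $j<n$ and $exp(A:i,n)=n-i$. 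Taking the maximum over $j$ gives $exp(A:i) = (n-i) + \max_{j} d(n,j) = (n-i) + 1 + m(V_1)$.

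Finally, Lemma \ref{lemma:exp(A)} gives $exp(A) = \max_i exp(A:i)$, and as $(n-i)+1+m(V_1)$ is maximised at $i=1$, we conclude $exp(A) = (n-1)+1+m(V_1) = n+m(V_1)$. The one genuinely delicate point, and the step I would check most carefully, is the interval-versus-gap bookkeeping for walks from $i$ to $j$ with $i \le j < n$: one must confirm that the short direct-path walk of length $j-i$ does not merge with the through-$n$ interval and shorten the exponent, which is exactly what the inequality $(n-j)+d(n,j)\ge 2$ secures. I would also verify the degenerate cases $V_1=\emptyset$, $j=n$, and $i=n$ to see that the single formula $n+m(V_1)$ survives unchanged.
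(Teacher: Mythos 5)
Your proposal is correct and follows essentially the same route as the paper: compute $exp(A:i,j)$ as $n-i+1$ for $j\in V_2$ and $n-i+1+p$ for $j\in V_1$ (your $(n-i)+d(n,j)$ is exactly this), then maximise over $j$ and over $i$ via Lemma \ref{lemma:exp(A)}. You additionally supply the justifications the paper leaves implicit --- the loop at $n$ making the set of walk lengths upward closed, and the gap argument showing the direct-path walk of length $j-i$ cannot lower the threshold --- and you even repair the paper's minor overstatement that $exp(A:i,n)=n-i+1$ (it is $n-i$ for $i<n$), none of which changes the final maximum.
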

\begin{proof}
Suppose that $i \in V$  and $j \in V_2$.  Then $exp(A:i,j)=n-i+1$. If $j \in V_1$, then $exp(A:i,j)=n-i+1+p$, where $p=j-\max\{[1,j] \cap V_2\}$ and $1 \leq p \leq j-1.$ Thus for each $i \in V,$ it follows from Lemma \ref{lemma:exp(A)} that  
$$exp(A:i)=\max\limits_{1 \leq j \leq n} \; \{exp(A:i,j)\}=n-i+1+m(V_1).$$ Hence 

 $$exp(A)  =  \max_{1 \leq i \leq n} \; \{exp(A:i)\} = \max_{1 \leq i \leq n}(n-i+1+m(V_1))  = n+m(V_1).$$
\end{proof}
\begin{cor}
\begin{enumerate}
\item
For a given $n,$ $m(V_1) \in [0, n-2].$ Hence $[n, 2(n-1)] \subset E(CP_n).$
\item 
For $t \in [n,2(n-1)]$, the number of matrices with a positive trace and with exponent $t$ in $CP_{n}$ is given by $\sum\limits_{x=k}^{n-2}F_{n-2}(x,k)$, where $k=t-n$ and $F_{n}(x,k)$ has been  described in \cite{M:S}.
\end{enumerate}
\end{cor}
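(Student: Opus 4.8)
The plan is to reduce everything to the combinatorics of $m(V_1)$ by invoking Theorem \ref{positive trace}, and to read off that combinatorics from the middle entries of the last row. The key preliminary observation is structural: a matrix $A \in CP_n$ has positive trace precisely when $a_{nn}>0$, i.e. $n \in V_2$, because the diagonal entries $a_{11},\dots,a_{n-1\,n-1}$ all vanish and so the trace equals $a_{nn}$. Irreducibility of a companion matrix forces $a_{n1}>0$, so $1\in V_2$ as well. A positive-trace matrix in $CP_n$ is therefore automatically primitive (the edge $(n,n)$ is an elementary cycle of length $1$, so the index of imprimitivity is $1$), and it is completely determined by the $n-2$ middle entries $a_{n2},\dots,a_{n\,n-1}$, the two outer entries being pinned to $1$.

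For part (1) I would first note the bound: since $1,n\in V_2$ we have $V_1\subseteq[2,n-1]$, and the longest block of consecutive integers inside $[2,n-1]$ has length at most $n-2$, so $m(V_1)\le n-2$. For the resulting interval containment, I would exhibit for each $k\in[0,n-2]$ a positive-trace matrix with $m(V_1)=k$: take the last row to carry $1$'s at positions $1$ and $k+2,\dots,n$ and $0$'s at positions $2,\dots,k+1$, so that $V_1=[2,k+1]$ and $m(V_1)=k$. By Theorem \ref{positive trace} this matrix has $exp(A)=n+k$, and as $k$ ranges over $[0,n-2]$ the exponent ranges over every integer in $[n,2(n-1)]$, whence that interval lies in $E(CP_n)$.

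For part (2), Theorem \ref{positive trace} converts the exponent condition into a run-length condition: a positive-trace $A\in CP_n$ satisfies $exp(A)=t$ iff $m(V_1)=t-n=k$. I would then set up the bijection between positive-trace matrices in $CP_n$ and binary strings of length $n-2$ by recording the middle entries $a_{n2}\cdots a_{n\,n-1}$. Under this map, a $0$ in coordinate $j$ of the string corresponds to the index $j+1$ lying in $V_1$, and because the flanking entries at positions $1$ and $n$ are $1$'s, the maximal blocks of $V_1$ coincide exactly with the maximal runs of $0$'s in the string. Hence $m(V_1)$ equals the length of the longest run of zeros, and counting positive-trace matrices with $m(V_1)=k$ is the same as counting length-$(n-2)$ strings whose longest zero-run is exactly $k$. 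Partitioning those strings by their number of zeros $x$ gives $\sum_x F_{n-2}(x,k)$; since such a string must have $k\le x\le n-2$ and $F_{n-2}(x,k)=0$ outside this range, the count is exactly $\sum_{x=k}^{n-2}F_{n-2}(x,k)$, as asserted.

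The substantive content (the exponent formula) is already in hand via Theorem \ref{positive trace}, so the remaining work is bookkeeping. The one place to be careful is the boundary behaviour of the bijection: I must confirm that positions $1$ and $n$ lying in $V_2$ prevent any zero-run from extending past the ends of the middle word, so that $m(V_1)$ is genuinely the longest zero-run of the length-$(n-2)$ string rather than of the full length-$n$ last row, and I must check the degenerate value $k=0$, where the only admissible string is all ones and the formula correctly yields the single matrix of exponent $n$.
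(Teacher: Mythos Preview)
Your proposal is correct and supplies exactly the details the paper leaves implicit: the corollary is stated without proof there, as an immediate consequence of Theorem~\ref{positive trace} together with the definition of $F_n(x,k)$. Your argument---fixing $a_{n1}=a_{nn}=1$, biject with binary strings of length $n-2$, identify $m(V_1)$ with the longest zero-run, and partition by the total number of zeros---is precisely the intended reading, including the observation that the loop at vertex $n$ forces primitivity so that every choice of middle entries lands in $CP_n$.
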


\begin{ex}
The exponent of the digraph in Figure \ref{fig:figure3} is $11.$ Observe that $1 \not\xrightarrow{10} 4.$

\begin{figure}[htb]
\begin{center}
\includegraphics[width=.3\linewidth, width=.3\textwidth]{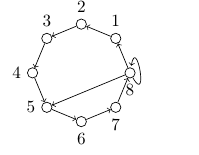}
\end{center}
\caption{} \label{fig:figure3}
\end{figure}

Also the number of matrices with a positive trace and exponent $11$ in $CP_8$ is $\sum\limits_{x=3}^{6}F_{6}(x,3)=12.$ 
\end{ex}
\section{Exponent of $A \in CP_n$ with zero trace}\label{sec:nloop} 
The exponent of the companion matrix of the polynomial $x^n-x-1$ is $\omega_n.$ And this is the only primitive matrix up to isomorphism whose exponent is $\omega_n.$ The following result shows that $E(CP_n) \subset [n, \omega_n]$ and there is only one matrix $A \in CP_n$ with $exp(A)=n.$ We need the following remark to prove the same.
\begin{rem}\label{rem:1}
If $A$ and $B$ are two primitive matrices of order $n$ such that $A \geq B$, then $exp(A) \leq exp(B)$.
\end{rem}
\begin{theorem}\label{unique lower}
Let $A \in CP_{n}$. Then $n\le exp(A)\le 
\omega_n.$  Moreover, there exists a unique $A \in CP_{n}$ such that $exp(A)=n.$
\end{theorem}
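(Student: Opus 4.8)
The plan is to handle the upper bound, the lower bound, and the uniqueness claim separately, with the latter two both flowing from a single structural analysis of the walks out of vertex $1$ in $D(A)$. The upper bound is immediate: every $A\in CP_n$ is primitive by definition, so the Wielandt bound recalled in the introduction gives $exp(A)\le (n-1)^2+1=\omega_n$.

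For the lower bound I would exploit that in $D(A)$ each vertex $i\in[1,n-1]$ has the single out-edge $i\to i+1$ coming from the superdiagonal, so that all branching occurs only at vertex $n$, whose out-neighbours are exactly $V_2$. Hence any walk starting at $1$ is forced along $1\to 2\to\cdots\to n$ for its first $n-1$ steps, and the only vertex reachable from $1$ by a walk of length $n-1$ is $n$. Thus the first row of $A^{n-1}$ is nonzero only in column $n$, so it is not all-positive, giving $exp(A:1)\ge n$ and therefore $exp(A)\ge n$ by Lemma \ref{lemma:exp(A)}.

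Pushing this one step further isolates the extremal matrix. A walk of length exactly $n$ from $1$ must be $1\to 2\to\cdots\to n\to j$ with $j\in V_2$, so the set of vertices reachable from $1$ in $n$ steps is precisely $V_2$. Therefore the first row of $A^{n}$ is all-positive if and only if $V_2=[1,n]$, i.e. every entry of the last row of $A$ equals $1$; this singles out $A^{*}$, the companion matrix of $x^{n}-x^{n-1}-\cdots-x-1$. Since $A^{*}$ has $V_1=\emptyset$, Theorem \ref{positive trace} gives $exp(A^{*})=n+m(V_1)=n$. For any other $A\in CP_n$ we have $V_2\subsetneq[1,n]$, so the first row of $A^{n}$ has a zero entry, forcing $exp(A:1)\ge n+1$ and hence $exp(A)>n$. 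This shows that $A^{*}$ attains the value $n$ and is the unique matrix in $CP_n$ that does so.

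The step I expect to be the crux is the uniqueness half rather than either bound. The bare inequality $exp(A)\ge n$ can also be read off from Remark \ref{rem:1}, since $A^{*}\ge A$ for every $A\in C_n$; but monotonicity alone does not distinguish the extremal matrix. To force $V_2=[1,n]$ one genuinely needs the exact description of the vertices reachable from $1$ at lengths $n-1$ and $n$, which in turn rests on the out-degree-one structure of the vertices $1,\dots,n-1$ in a companion digraph.
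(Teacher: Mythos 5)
Your proof is correct, and the bounds are obtained essentially as in the paper (Wielandt for the upper bound; the forced walk $1\to 2\to\cdots\to n$, equivalently the fact that the shortest cycle through vertex $1$ has length $n$, for the lower bound). Where you genuinely diverge is the uniqueness claim. The paper does not argue via reachability at step $n$; instead it computes $exp(A^*)=n$ directly from the loop at vertex $n$ (namely $exp(A^*:i)=n-i+1$), then computes the exponents of the ``next largest'' matrices --- the companion matrices $A(t)$ of $x^n-\sum_{k\neq t}x^k-1$ have exponent $n+1$, and the one with the loop at $n$ deleted has exponent $>n+1$ --- and finally invokes the monotonicity of Remark \ref{rem:1} to push every non-extremal $A$ below one of these and conclude $exp(A)\ge n+1$. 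Your route is more direct and self-contained: observing that the support of row $1$ of $A^n$ is exactly $V_2$ immediately forces $V_2=[1,n]$ for any matrix of exponent $n$, with no need to enumerate the maximal non-extremal matrices or to appeal to monotonicity. What the paper's detour buys is the extra information that $exp(A(t))=n+1$, i.e.\ explicit witnesses for $n+1\in E(CP_n)$; what yours buys is brevity and the avoidance of a small case analysis (note the paper's range $t\in\{2,\dots,n-2\}$ silently omits the matrix missing the edge $n\to 2$, a gap your argument does not have). One cosmetic remark: your appeal to Theorem \ref{positive trace} to get $exp(A^*)=n+m(\emptyset)=n$ is legitimate since that theorem precedes this one, but it is slightly heavier than needed --- the paper's one-line computation $exp(A^*:i)=n-i+1$ from the loop at $n$ would do.
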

\begin{proof}
For $A \in CP_{n}$, $n$ is the smallest length of cycles containing the vertex $1$ in $D(A)$. As a consequence $exp(A)\geq n$.

Let $A$ be the companion matrix of $ f(x)=x^{n}- \sum\limits_{k=1}^{n-1} x^k-1 $.  Then $exp(A:i)=n-i+1$ as $D(A)$ contains a loop at the vertex $n$ and there is an edge from $n$ to each vertex $j$. Hence by Lemma \ref{lemma:exp(A)},  $exp(A)= n.$

Suppose $t \in \{2,3,\ldots,n-2\}$ and $A(t)$ is the companion matrix of $ f(x)=x^{n}- \sum\limits_{k=1,k\neq t}^{n-1} x^k-1 $.  Then $exp(A(t):i)=n-i+2$ as $D(A(t))$ contains a loop at the vertex $n$ and there is an edge from $n$ to each vertex $j,$ $j \neq t.$ Hence $exp(A(t))=n+1.$ Finally, if $A$ is the companion matrix of $ f(x)=x^{n}- \sum\limits_{k=1}^{n-2} x^k-1 ,$ then $exp(A) > n+1$ as $1 \not\xrightarrow{n+1} 1.$ Hence from Remark \ref{rem:1}, there exists a unique matrix $A$ in $CP_{n}$ such that $exp(A)=n.$ 
\end{proof}

The following result can be found in Jia Yu Shao \cite{Shao:2}. Here we are giving a proof for the sake of completeness.
\begin{theorem}\label{two cycles}
Suppose $n \geq 3$ and $A \in CP_n$ such that $D(A)$ contains only two elementary cycles of length $n$ and $s$. Then $exp(A)=n+s(n-2)$. 
\end{theorem}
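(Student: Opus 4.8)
The plan is to reduce the whole computation to the structure of $D(A)$ together with the arithmetic of the two cycle lengths. Since $A\in CP_n$ has exactly two elementary cycles, of lengths $n$ and $s$, and is primitive, we must have $\gcd(n,s)=1$ (otherwise the index of imprimitivity would exceed $1$), and $2\le s\le n-1$. Writing $r=n-s+1$, the digraph $D(A)$ consists of the forced path $1\to 2\to\cdots\to n$ together with exactly two arcs out of $n$: the arc $n\to 1$ (closing the cycle of length $n$) and the arc $n\to r$ (closing the cycle of length $s$). The first structural fact I would establish is that a closed walk based at $n$ of length $\ell$ exists if and only if $\ell=an+bs$ for some integers $a,b\ge 0$ with $a+b\ge 1$. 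Indeed, once a walk leaves $n$ it is forced along the path until it next returns to $n$, so every closed walk at $n$ is a concatenation of the two elementary cycles, and conversely every such concatenation is realizable.

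Next I would describe the lengths of all $i\to j$ walks. Each such walk, other than the direct path $i\to\cdots\to j$ of length $j-i$ available only when $i\le j$, splits uniquely as: the forced segment from $i$ to $n$ of length $n-i$; a closed walk at $n$ of length $an+bs$ with $a,b\ge 0$; and a final forced segment leaving $n$ for the last time, either through $1$ (reaching $j$ in $j$ further steps) or, when $j\ge r$, through $r$ (reaching $j$ in $j-n+s$ further steps). Writing $\Sigma=\{an+bs:a,b\ge 0\}$, the set of lengths of $i\to j$ walks is therefore
$$S_{ij}=\{\,j-i\,\}_{i\le j}\ \cup\ \bigl((n-i)+j+\Sigma\bigr)\ \cup\ \bigl((n-i)+(j-n+s)+\Sigma\bigr)_{j\ge r},$$
where the last coset is present only when $j\ge r$. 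Thus $exp(A:i,j)$ is the conductor of a shift, or a union of two shifts, of the numerical semigroup $\Sigma$.

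The arithmetic input is the Frobenius--Sylvester formula: since $\gcd(n,s)=1$, the largest integer missing from $\Sigma$ is $ns-n-s$, so every integer $\ge (n-1)(s-1)$ lies in $\Sigma$, and for any constant $c$ every integer $\ge c+(n-1)(s-1)$ lies in $c+\Sigma$. Applying this to each piece of $S_{ij}$ and observing that the isolated value $j-i$ and the lower coset sit well below the governing threshold, I would compute $exp(A:i,j)$ in the four cases fixed by whether $i\le j$ and whether $j\ge r$, and then maximize. The small-cycle exit is unavailable precisely when $j<r=n-s+1$; there $exp(A:i,j)=(n-i)+j+(n-1)(s-1)$, maximized at $i=1,\ j=n-s$, which gives $(2n-s-1)+(n-1)(s-1)=n+s(n-2)$. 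In each remaining case the extra small-cycle exit lowers the offset, and using $s<n$ one checks the resulting threshold is strictly smaller (for instance when $i\le j$ and $j\ge r$ one gets at most $(n-1)s<n+s(n-2)$). Hence by Lemma~\ref{lemma:exp(A)}, $exp(A)=\max_{i,j}exp(A:i,j)=n+s(n-2)$, attained by the pair $(1,\,n-s)$.

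The Frobenius estimate and the bookkeeping are routine; the step that needs genuine care is the description of $S_{ij}$. One must argue that every $i\to j$ walk really does decompose as claimed, in particular that after the last departure from $n$ the walk is forced straight to $j$, and that the auxiliary contributions, namely the single direct-path length when $i\le j$ and the lower coset $(n-i)+(j-n+s)+\Sigma$ when $j\ge r$, never fill the topmost gap and so never lower the thresholds that matter. Confirming that $(1,n-s)$ genuinely dominates all four cases, rather than merely producing a large value, is where $s<n$ and $\gcd(n,s)=1$ enter, and this comparison is the crux of the argument.
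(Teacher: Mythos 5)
Your argument is correct but takes a genuinely different route for the upper bound. The lower-bound half coincides with the paper's: both isolate the pair $(1,n-s)$, note that every walk from $1$ to $n-s$ other than the direct path has length $2n-s-1+an+bs$, and use the Sylvester--Frobenius fact that $(n-1)(s-1)-1$ is the largest integer not of the form $an+bs$ with $a,b\ge 0$, giving $exp(A:1,n-s)=n+s(n-2)$. For the matching upper bound the paper simply cites the general result (from Brualdi and Ryser) that a primitive digraph on $n$ vertices with shortest cycle length $s$ has exponent at most $n+s(n-2)$, whereas you recompute every $exp(A:i,j)$ from the explicit walk-length sets $S_{ij}$ and maximize. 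Your case analysis does close: for $j<n-s+1$ the conductor is $(n-i)+j+(n-1)(s-1)$, maximized at $(1,n-s)$, and the isolated direct-path length cannot occupy the top gap, since $j-i=(n-i)+j+(n-1)(s-1)-1$ would force $n+(n-1)(s-1)=1$; for $j\ge n-s+1$ the two cosets satisfy $(n-i)+j=(j-i)+n$ and $(n-i)+(j-n+s)=(j-i)+s$, so their union is exactly $(j-i)+\bigl(\Sigma\setminus\{0\}\bigr)$, whose conductor is at most $(n-1)+(n-1)(s-1)=(n-1)s<n+s(n-2)$ because $s<n$ --- this is the cleanest way to justify the $(n-1)s$ bound you assert. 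Your route is longer but self-contained and yields all the individual $exp(A:i,j)$ (hence the zero pattern of every power $A^k$), not just their maximum; the paper's route is shorter but leans on an external theorem. One minor point: you restrict to $2\le s\le n-1$, while the statement as written also admits $s=1$ (a loop at $n$); that case is already covered by the positive-trace theorem and by your decomposition with $r=n$.
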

\begin{proof}
Suppose that $\ell$ is the length of a walk from $1$ to $n-s$ and $\ell \geq n.$ Then $\ell = n+an+bs+(n-s-1)$ for some nonnegative integer $a$ and $b$. It is now known that for all nonnegative integers $i,$ $(n-1)(s-1)+i$ can be expressed as a nonnegative integer linear combination of $n$ and $s.$ But $(n-1)(s-1)-1$ cannot be expressed as a nonnegative integer linear combination of $n$ and $s$. Thus $n+s(n-2)$ is the smallest number which is larger than $n$ and $1 \xrightarrow{(n+s(n-2))^+} n-s.$ Hence $exp(A:1,n-s)=n+s(n-2).$

From \cite{bru}, if $D(A)$ is a primitive digraph  with $n$ vertices and smallest cycle length $s,$ then $exp(D(A)) \leq n+s(n-2).$ Hence the result follows.
\end{proof}

\begin{lemma}\label{lemmm :exp(A:1)}
\begin{enumerate}
Let $n \geq 3,$ $A \in CP_n$ with zero trace and let $D(A)$ be the adjacency digraph of $A$ with the vertex set $V=\{1,2,\ldots,n\}.$ 
\item \label{lem :exp(A:1)}
Then $exp(A)=exp(A:1).$
\item \label{lem:exp(A:1,j)} 
If $j \in V_{1},$ then $exp(A:1,j)=exp(A:1,j-p)+p$, where $p=j-\max\{[1, j] \cap V_2\}$ and $1 \leq p \leq j-1.$
\end{enumerate}
\end{lemma}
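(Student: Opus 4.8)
The plan rests on one structural feature of the companion digraph $D(A)$: for $k<n$ the only out-edge of $k$ is $k\to k+1$, while $n$ sends edges precisely to the vertices of $V_2$; dually, the only in-edge of a vertex $w\in V_1$ is $w-1\to w$. Since $A$ is irreducible we have $1\in V_2$, so in part (ii) the vertex $q:=\max\{[1,j]\cap V_2\}$ is well defined and satisfies $1\le q<j$, i.e.\ $1\le p\le j-1$. The zero-trace hypothesis guarantees $n\in V_1$ (there is no loop at $n$), so the in-edge description applies uniformly, including to $j=n$.

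For part (i) I would first establish the walk-translation equivalence
$$i\xrightarrow{\ell} j \iff 1\xrightarrow{\ell+(i-1)} j\qquad(\ell\ge 0).$$
A walk issued from $1$ is forced to run $1\to 2\to\cdots\to n$ during its first $n-1$ steps before it can branch at $n$; in particular its first $i-1$ steps must be the climb $1\to 2\to\cdots\to i$. Hence prepending this climb to an arbitrary $i\to j$ walk, and conversely deleting the first $i-1$ steps of an arbitrary $1\to j$ walk, are mutually inverse operations shifting the length by exactly $i-1$. Passing from ``a walk of length $\ell$'' to ``walks of all lengths $\ge\ell$'' then yields $exp(A:i,j)=exp(A:1,j)-(i-1)$. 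As the offset $i-1$ is independent of $j$, maximizing over $j$ gives $exp(A:i)=exp(A:1)-(i-1)$, which is largest at $i=1$; Lemma \ref{lemma:exp(A)} then delivers $exp(A)=\max_{i}exp(A:i)=exp(A:1)$.

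For part (ii) I would run the dual argument at the terminal end of a walk. Because $q+1,q+2,\ldots,j$ all lie in $V_1$, each has a unique in-edge from its predecessor, so every walk that ends at $j$ must finish with the forced climb $q\to q+1\to\cdots\to j$ of length $p=j-q$, where $q=j-p\in V_2$. Appending this climb to a $1\to q$ walk and stripping it from a $1\to j$ walk are again mutually inverse and shift the length by $p$, so $1\xrightarrow{m} j\iff 1\xrightarrow{m-p} q$ for every $m$. Translating this to the $(\cdot)^+$ thresholds gives $exp(A:1,j)=exp(A:1,q)+p=exp(A:1,j-p)+p$, which is the claim.

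The two walk-correspondences are the routine content; the one place that needs care is the bookkeeping that upgrades each single-length equivalence to an identity of exponents, since $exp(A:\cdot,\cdot)$ is defined through the $(\cdot)^+$ notion rather than through a single reachable length. Concretely, I must check that the forced initial segment (in (i)) and the forced terminal segment (in (ii)) are unavoidable for \emph{every} walk of the relevant length, not merely for a shortest one, and that the length shifts line up with the ``for all sufficiently long $\ell$'' quantifier. Once those forced segments are pinned down, both exponent identities follow immediately.
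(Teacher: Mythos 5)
Your proposal is correct and follows essentially the same route as the paper: part (i) by peeling off the forced initial edge(s) $1\to 2\to\cdots$ to get $exp(A:n)<\cdots<exp(A:1)$ and then invoking Lemma \ref{lemma:exp(A)}, and part (ii) by observing that every walk into $j\in V_1$ must end with the forced segment $j-p\to\cdots\to j$, so lengths shift by exactly $p$ in both directions. The only difference is cosmetic: you state the sharper identity $exp(A:i,j)=exp(A:1,j)-(i-1)$, whereas the paper only records the inequality $exp(A:i)\le exp(A:i-1)-1$, which is all that is needed.
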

\begin{proof}
Proof of part \ref{lem :exp(A:1)}.
Suppose $exp(A:1)=l$, that is, $l$ is the least positive integer such that $1 \xrightarrow{l} j$ for all $j \in V.$ Then $2 \xrightarrow {l-1} j$ for all $j \in V$. Hence $exp(A:2) \leq l-1 < exp(A:1)$. Since $|V|$ is finite, then by a similar argument we can say that $exp(A:n)<exp(A:n-1)<exp(A:n-2)<\dots<exp(A:1)$. Hence the result follows from Lemma \ref{lemma:exp(A)}.

Proof of Part \ref{lem:exp(A:1,j)}.
The existence of $p$ follows from the fact that $1 \in V_{2}$. Since every walk from $1$ to $j$ must contain the vertex $j-p,$ then $1 \xrightarrow{l-p} j-p$ whenever $1 \xrightarrow{l} j.$ Suppose that $exp(A:1,j)=x.$ Then from the definition of $exp(A:1,j)$ we can write $exp(A:1,j-p) \leq x-p.$ But  $exp(A:1,j-p) < x-p$ will contradict the fact that $exp(A:1,j)=x$. Hence  $exp(A:1,j-p)=x-p$ and the result follows.
\end{proof}
Finally, to find the $exp(A)$ where $A \in CP_n$ and the trace of $A$ is zero, it is sufficient to find $exp(A:1,j)$, where $j \in V_{2}$. Suppose $a_1, a_2, \ldots, a_u$ are relatively prime and $F(a_1, a_2, \ldots, a_u)$ denotes the smallest integer such that this integer or any integer larger than this can be expressed as $n_1 a_1+n_2 a_2+\dots+ n_u a_u,$ where $n_r$ is a nonnegative integer for $r=1, 2, \ldots, u.$ The number $F(a_1, a_2, \ldots, a_u)$ is called as the {\it Frobenius number}. This function  $F(a_1, a_2, \ldots, a_u)$ has been discussed by Bateman \cite{bate}, Brauer and Seelbinder \cite{brau:see}, Johnson \cite{john} and Roberts \cite{robert}. It is known that if $m$ and $n$ are relatively prime then $F(m,n)=(m-1)(n-1).$ Roberts has shown that if $a_j=a_0+jd$, $j=0, 1, 2, \ldots, s$, $a_0 \geq 2,$ then $$F(a_0,a_1, a_2, \ldots, a_s)=\left(\bigg\lfloor\frac{a_0-2}{s}\bigg\rfloor+1\right)a_0+(d-1)(a_0-1),$$ where as usual $\lfloor x \rfloor$ denotes the greatest integer $ \leq x.$ The proof of this result has been  simplified by Bateman. Johnson has given an ingenious algorithm which can be used to find $F$ in the case of three variables. It is now easy to establish that for each $j \in V_{2},$  $exp(A:1,j)\leq n+F(\ell_{1},\ell_{2},\ldots,\ell_{z}).$  In particular, we have the following result for $j=1$.
\begin{lemma}\label{lem:exp(A:1,1)}
Let $A \in CP_n$ with zero trace. Then $exp(A:1,1)=n+F(\ell_{1},\ell_{2},\ldots,\ell_{z})$.
\end{lemma}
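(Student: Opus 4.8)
The plan is to translate the combinatorial quantity $exp(A:1,1)$, the least $k$ with $1 \xrightarrow{k^+} 1$, into a statement about which integers occur as lengths of closed walks at vertex $1$ in $D(A)$, and then to match that set against the Frobenius number. Throughout I would exploit the rigid structure of a zero-trace companion digraph: the only out-edge from each vertex $i \in [1,n-1]$ is $i \to i+1$, while vertex $n$ carries the back-edges $n \to j$ exactly for $j \in V_2$. Since $A$ is irreducible we have $1 \in V_2$, so the length-$n$ cycle $1 \to 2 \to \cdots \to n \to 1$ exists, and since the trace is zero we have $n \notin V_2$, so there is no loop. Because every cycle must use a back-edge and every back-edge leaves $n$, every elementary cycle passes through $n$; having distinct vertices it uses exactly one back-edge $n \to j$ followed by the forced path $j \to j+1 \to \cdots \to n$, so the elementary cycle lengths are precisely $\{\,n-j+1 : j \in V_2\,\} = L = \{\ell_1,\ldots,\ell_z\}$.

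First I would describe the lengths of closed walks based at $n$. As each vertex $j<n$ has out-degree one, once the walk leaves $n$ it is forced along $j \to j+1 \to \cdots \to n$ back to $n$; hence every closed walk at $n$ splits uniquely at its successive returns to $n$ into a concatenation of elementary cycles, and its length is a nonnegative integer combination of $\ell_1,\ldots,\ell_z$. Conversely every such combination is realized by concatenating the corresponding cycles, so the set of closed-walk lengths at $n$ is exactly the numerical semigroup $\langle \ell_1,\ldots,\ell_z\rangle$ (with $0$ for the empty walk).

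Next I would reduce closed walks at $1$ to closed walks at $n$. The first edges of any closed walk at $1$ are forced to be $1 \to 2 \to \cdots \to n$, contributing length $n-1$ to reach $n$ for the first time; the remaining portion is a walk from $n$ to $1$. Decomposing that portion at its visits to $n$, all but the last segment are elementary cycles, and the last segment must be the single edge $n \to 1$, since any terminal back-edge $n \to j$ with $j>1$ forces the walk strictly upward, away from $1$. Thus a walk from $n$ to $1$ has length $c+1$ with $c \in \langle \ell_1,\ldots,\ell_z\rangle$, and a closed walk at $1$ has length $(n-1)+(c+1)=n+c$. Consequently $1 \xrightarrow{m} 1$ holds if and only if $m-n \in \langle \ell_1,\ldots,\ell_z\rangle$.

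Finally, $exp(A:1,1)$ is the least $k$ for which $m-n \in \langle \ell_1,\ldots,\ell_z\rangle$ for all $m \ge k$, i.e. $n$ plus the least integer $f$ such that every integer $\ge f$ lies in $\langle \ell_1,\ldots,\ell_z\rangle$. Since $A$ is primitive, $\gcd(\ell_1,\ldots,\ell_z)=1$, so this $f$ is exactly $F(\ell_1,\ldots,\ell_z)$, yielding $exp(A:1,1)=n+F(\ell_1,\ldots,\ell_z)$. I expect the only delicate points to be the clean decomposition of closed walks at $n$ into elementary cycles and the claim that a walk ending at $1$ must close with the edge $n \to 1$; both follow immediately once the out-degree-one structure of the vertices $1,\ldots,n-1$ is used, after which the conclusion is routine bookkeeping with the Frobenius number whose value is supplied by the paper's earlier discussion.
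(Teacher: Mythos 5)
Your proposal is correct and follows essentially the same route as the paper's proof: both reduce $exp(A:1,1)$ to the observation that closed-walk lengths at vertex $1$ are exactly $n$ plus nonnegative integer combinations of $\ell_1,\ldots,\ell_z$, and then identify the resulting threshold with $F(\ell_1,\ldots,\ell_z)$. Your write-up merely makes explicit the walk decomposition at vertex $n$ (and the fact that the final segment must be the edge $n\to 1$) that the paper leaves implicit.
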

\begin{proof}
We have $exp(A:1,1) \geq n$. Suppose that $exp(A:1,1)=n+x.$ Clearly, $x \geq 2$ is an integer. It follows from the definition of $exp(A:1,1)$ that $1 \xrightarrow{(n+x)^+} 1$ but $1 \not\xrightarrow{n+x-1} 1.$ Now every walk from $1$ to $1$ is a nonnegative integer linear combination of $\ell_{1},\ell_{2},\dots,\ell_{z}.$ Thus every integer $\geq x$  can be expressed as a nonnegative integer linear combination of $\ell_{1},\ell_{2},\dots,\ell_{z}$ whereas $x-1$ can not be expressed as a nonnegative integer linear combination of $\ell_{1},\ell_{2},\dots,\ell_{z}$. Hence $x=F(\ell_{1},\ell_{2},\dots,\ell_{z})$ and the result follows.
\end{proof}
\begin{cor}
Suppose that $j \in V_2$ and $j\neq 1.$ Then $1 \xrightarrow{l} j$  whenever $1 \xrightarrow{l} 1.$ 
Also $1 \xrightarrow{j-1} j.$ Hence $exp(A:1,j) \leq exp(A:1,1).$
\end{cor}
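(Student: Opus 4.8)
The plan is to exploit the very rigid in-neighbourhood of vertex $1$ in $D(A)$. Because $A$ is a companion matrix, the only entry that can be nonzero in the first column is $a_{n1}$: the edges coming from the fixed entries $a_{i,i+1}=1$ all have the form $i\to i+1$ with $i+1\ge 2$, and hence none of them enters vertex $1$. Since $A\in CP_n$ forces $1\in V_2$ (otherwise vertex $1$ would have no in-edge and $D(A)$ could not be strongly connected), the edge $n\to 1$ exists and is the \emph{only} edge entering vertex $1$. Likewise, for the given $j\in V_2$ with $j\neq 1$ the edge $n\to j$ is present; note that the zero-trace hypothesis guarantees $n\notin V_2$, so $j\neq n$ and $n\to j$ is a genuine (loopless) edge. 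This structural observation is the hinge of the whole argument.

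First I would prove the implication. Suppose $1\xrightarrow{l}1$, and fix a walk of length $l$ from $1$ to $1$. Its final edge enters vertex $1$, and by the observation above that edge must be $n\to 1$; deleting it leaves a walk of length $l-1$ from $1$ to $n$, so $1\xrightarrow{l-1}n$. Appending the edge $n\to j$ to this walk then yields $1\xrightarrow{l}j$, which establishes the first assertion. The second assertion is immediate: the directed path $1\to 2\to\cdots\to j$ built from the edges $i\to i+1$ has length $j-1$, so $1\xrightarrow{j-1}j$.

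Finally I would deduce the exponent inequality. Put $k=exp(A:1,1)$, so that $1\xrightarrow{m}1$ for every $m\ge k$ by the definition of $exp(A:1,1)$. Applying the implication just proved, $1\xrightarrow{m}j$ for every $m\ge k$ as well, that is, $1\xrightarrow{k^+}j$. Hence $exp(A:1,j)\le k=exp(A:1,1)$, as claimed. The walk of length $j-1$ is recorded for completeness; since $j-1\le n-2<n\le exp(A:1,1)$ it does not itself affect the bound.

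The argument is short, and essentially all of its content sits in the structural claim that $n\to 1$ is the unique edge into vertex $1$. This is exactly the step where the companion form and the membership $1\in V_2$ are used, so the only thing requiring care is to state and justify that uniqueness cleanly; once it is in hand, both the implication and the final inequality follow with no computation.
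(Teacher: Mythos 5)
Your proof is correct and follows the same route the paper intends (the corollary is stated with its argument embedded: reroute the forced final edge $n\to 1$ of any closed walk at $1$ to the edge $n\to j$, which exists because $j\in V_2$). Your explicit justification that $n\to 1$ is the unique in-edge of vertex $1$ is exactly the detail the paper leaves implicit, and the rest of your write-up matches the paper's reasoning.
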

The following remark evaluates the $exp(A:1,j)$ for some $j \in V_2.$  
\begin{rem}\label{rem:special vertex}
\begin{enumerate}
\item
Suppose $j \in V_{2}$ such that $[j-\ell_1+1, j] \subseteq V_{2}$, then $exp(A:1,j)=n$. Such a vertex $j \in V_{2}$ is named a `${ {\bf special \;\; vertex}}$'.
\item
Suppose $j \in V_{2}$ which is not a special vertex and $j \geq \ell_1.$ Then there exists a $p$ such that $p \in [1, \ell_1-1]$ and $j-p \in V_{1}$. If $\mathcal{M}=max\{p \in [1, \ell_1-1] :j-p \in V_{1}\}$, then $exp(A:1,j) \geq n+\mathcal{M}$. The equality holds if $j-(\mathcal{M}+1) \in V_{2}$ is a special vertex. 
\item
Suppose that $j \in V_{2}$ and $exp(A:1,j)=x,$ where $x$ cannot be expressed as a nonnegative integer linear combination of the elements from $L.$ Then for any $j' \in V_{2}$ with $j' > j$, $exp(A:1,j')=x+(j'-j)$, provided $x+(j'-j) \leq n+F(\ell_{1},\ell_{2}, \dots,\ell_{z})$.

For an example, choose $Y=(0,0,1,1,0,0,0)$; then $exp(A_Y:1,4)=15$ and this implies that $exp(A_Y:1,5)=16$.
\end{enumerate}
\end{rem}
The following remark provides $exp(A)$ for some $A \in CP_n$ with zero trace.
\begin{rem}
\begin{enumerate}\label{rem:max:exp(A)}
\item \label{rem:max1}
Suppose $A \in CP_n$ with zero trace. If  $[2,m(V_1)+1] \subseteq V_{1},$ then $exp(A)=n+F(\ell_{1},\ell_{2}, \dots,\ell_{z})+m(V_1)$.
\item
For $n \geq 3,$ the maximum number of matrices whose exponent can be evaluated by Part \ref{rem:max1} is $\sum\limits_{m=0}^{n-3}T_{m+1}(n-m-3).$ The number of such matrices will be exactly $\sum\limits_{m=0}^{n-3}T_{m+1}(n-m-3)$ if $n$ is a prime number.
\item \label{rem:max2}
Suppose $s, s'$ are two relatively prime integers such that $s > s'.$ If $n \geq F(s,s')$ and $\max\{s-s', s', n-s\}=n-s$, then $exp(A)=2(n-s)+s'(s-1)$ for some $A \in CP_n$ with zero trace. That is, $2(n-s)+s'(s-1) \in E(CP_n).$

For example, it is easy to see from Figure \ref{fig:figure4} that $22 \in E(CP_{10}).$
\begin{figure}[htb]
\begin{center}
\includegraphics[width=.2\linewidth, width=.2\textwidth]{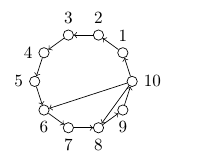}
\end{center}
\caption{} \label{fig:figure4}
\end{figure}
\item \label{rem:max3}
Suppose that $ a_0 \in [\ell_1+1,n-\ell_1+1].$ If $n \geq\left(\lfloor\frac{\ell_1-2}{n-\ell_1-a_0+1}\rfloor+1\right)\ell_1,$ then $exp(A)=n+\left(\lfloor\frac{\ell_1-2}{n-\ell_1-a_0+1}\rfloor+1\right)\ell_1+(a_0-2).$

Figure \ref{fig:figure5} is an example where $\ell_1=4$, $a_0=5,$ $\left(\lfloor\frac{\ell_1-2}{n-\ell_1-a_0+1}\rfloor+1\right)\ell_1=8$ and $21 \in E(CP_{10}).$ 
\begin{figure}[htb]
\begin{center}
\includegraphics[width=.2\linewidth, width=.2\textwidth]{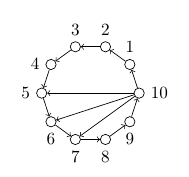}
\end{center}
\caption{} \label{fig:figure5}
\end{figure}
\end{enumerate}
\end{rem}
The following remark provides $exp(A)$ and the numbers in the exponent set $E(CP_n)$ whenever $\ell_1=2.$
\begin{rem}\label{cycle length two}
\begin{enumerate}
\item\label{cycle length two:1} 
Let  $s$ be the smallest odd cycle length in $D(A)$. If $j \in V_{2}$ and $j$ is not a special vertex, then $exp(A:1,j)=n+p-1$, where p is the smallest odd number such that $p < s$ and $j-p \in V_{2}.$  Otherwise, $exp(A:1,j)=n+s-1$.

For an example, choose $Y=(1,0,1,1,0,0,1,0,0,0,1,0,0,1,0).$ Then $exp(A_Y:1,15)=16+3-1$ and $exp(A_Y:1,12)=16+5-1$. 

Thus for an integer $n \geq 4,$ if $A \in CP_n$ with $\ell_1=2$ in $D(A),$ then the exponent of $A$ can be easily evaluated.
\item \label{cycle length two:2} 
Recall that we have $[n, 2n-2] \subset E(CP_n).$ Now if $n$ is odd and $A \in CP_n$ with $\ell_1=2$ in $D(A),$ then $exp(A) \in [n, 3n-4].$  For all nonnegative integers $x \leq \frac{n-3}{2},$ one can see that $2n-1+2x \in E(CP_n)$ by considering the digraph $D(A)$ with $V_1=\{1,2(x+1),n-1\}.$  But if $n$ is even, then the $exp(A)$ cannot go beyond $2n-2.$ The digraph $D(A)$ in Figure \ref{fig:figure6} illustrates the same for odd $n$. Here $n=15$, $x=2$ and $33 \in E(CP_{15}).$ 
\begin{figure}[htb]
\begin{center}
\includegraphics[width=.3\linewidth, width=.3\textwidth]{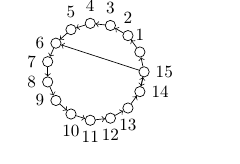}
\end{center}
\caption{} \label{fig:figure6}
\end{figure}
\end{enumerate}
\end{rem}

\section{Problems}\label{sec:prob}

\

In this section, we suggest a few problems
arising naturally from the ideas in the preceding sections.

\begin{enumerate}
\item \label{OoProb1} Here we established the existence of certain numbers in $E(CP_n)$ but we are unable to characterize $E(CP_n)$ completely. Complete characterization is
an interesting problem but it may be quite difficult as there is no known formula for $F(a_0,a_1, \ldots ,a_n)$ when $n \geq 3.$  Consequently, finding $E(CP_n)$ in another approach may be helpful in finding $F(a_0,a_1, \ldots ,a_n)$. Thus further research can be focused here.

\item
We found $exp(A)$ for certain cases such as $A \in CP_n$ with a positive trace, with the smallest cycle length $2$ or $n-1$ in $D(A).$ Hence an immediate question that can be raised is about the remaining $A \in CP_n$, in particular: what will be the exponent set of all $A \in CP_n$ with the smallest cycle length $3$ in $D(A)$? And what will be the exponents of all such matrices? Similar questions can be asked for the smallest cycle length $\ell_1,$ $4 \leq \ell_1  \leq n-2.$ Clearly, this problem can be treated as a simplified form of the problem in Part \ref{OoProb1}. 
\end{enumerate}

\end{document}